\documentclass[10pt]{ieeeconf}

\IEEEoverridecommandlockouts

\overrideIEEEmargins

\usepackage{amsmath,amssymb}


\title{\LARGE \bf Fractional Noether's Theorem with Classical\\
and Riemann--Liouville Derivatives}

\author{Gast\~{a}o S. F. Frederico$^{1, 2}$ and Delfim F. M. Torres$^{2}$%
\thanks{$^{1}$G.S.F. Frederico is with
Greg\'{o}rio Semedo University, Luanda, Angola.}%
\thanks{$^{2}$G.S.F. Frederico and D.F.M. Torres
are with the Center for Research and Development in Mathematics and Applications,
Department of Mathematics, University of Aveiro, 3810-193 Aveiro, Portugal
{\tt\small delfim@ua.pt}}}


\newtheorem{theorem}{Theorem}
\newtheorem{corollary}[theorem]{Corollary}
\newtheorem{lemma}[theorem]{Lemma}
\newtheorem{definition}[theorem]{Definition}
\newtheorem{example}[theorem]{Example}

\newtheorem{remark}[theorem]{Remark}
\newtheorem{problem}[theorem]{Problem}


\begin{document}

\maketitle


\begin{abstract}
We prove a Noether type symmetry theorem to
fractional problems of the calculus
of variations with classical and
Riemann--Liouville derivatives. As result, we obtain
constants of motion (in the classical sense) that are valid along
the mixed classical/fractional Euler--Lagrange extremals.
Both Lagrangian and Hamiltonian versions of the Noether theorem are
obtained. Finally, we extend our Noether's theorem to more general problems
of optimal control with classical and Riemann--Liouville derivatives.
\end{abstract}


\begin{keywords}
calculus of variations, optimal control,
fractional derivatives, Euler--Lagrange equations,
invariance, Noether's theorem.
\end{keywords}

\noindent\textbf{2010 Mathematics Subject Classification:} 49K05, 26A33.


\section{INTRODUCTION}

The concept of symmetry plays an important role in science and
engineering. Symmetries are described by transformations,
which result in the same object after the transformations are carried out.
They are described mathematically by parameter groups of transformations
\cite{CD:Gel:1963,Logan:b,CD:JMS:Torres:2002a,CD:Bruce:2004}.
Their importance, as recognized by Noether in 1918 \cite{Noether:1971},
is connected with the existence of conservation laws
that can be used to reduce the order of the Euler--Lagrange differential equations
\cite{MR2351636}. Noether's symmetry theorem is nowadays recognized
as one of the most beautiful results
of the calculus of variations and optimal control
\cite{CD:Djukic:1972,CD:Kosmann:2004}.

The fractional calculus is an area of current strong
research with many different and important applications
\cite{Kilbas,CD:MilRos:1993,CD:Podlubny:1999,CD:SaKiMa:1993}.
In the last years, its importance in the calculus of variations
and optimal control has been perceived, and a fractional variational
theory began to be developed by several different authors
\cite{AGRA6,CD:BalAv:2004,Cresson,MR2349759,MR2421931,MR2905862,CD:Hilfer:2000,CD:Muslih:2005,CD:Riewe:1997,MR2895863}.
Most part of the results in this direction make use of fractional derivatives
in the sense of Riemann--Liouville
\cite{CD:BalAv:2004,MR2349759,CD:FredericoTorres:2007,CD:FredericoTorres:2006,CD:Muslih:2005},
FALVA \cite{MR2421931,MR2405377}, Caputo \cite{AGRA6,Caputo} or Riesz
\cite{CD:FredericoTorres:2010}. For a more general approach
see \cite{FVC_Gen,MyID:227}; for the state of the art
we refer the reader to the recent book \cite{book:frac}.
Here we borrow a recent ``transfer formula''
from \cite{CD:BoCrGr} that allows to obtain Noether conservation
laws, in the classical sense, for fractional variational problems.
In contrast with \cite{CD:BoCrGr}, where only momentum-type
conservation laws are obtained, here we prove a Noether-type
theorem in its general form, including both momentum and energy terms.
We trust our results will have an important impact on Mechanics.
Indeed, the classical Noether theorem is only valid for conservative systems,
while most processes observed in the physical world are
nonconservative. By dealing with Lagrangians constructed using fractional derivatives,
we can deal with nonconservative equations of motion \cite{CD:Riewe:1997}.

The article is organized as follows. In Section~\ref{sec:fdRL}
we review the basics of fractional calculus.
In Section~\ref{sec:delay} we use the Euler--Lagrange equations
(Theorem~\ref{Thm:FractELeq1}) and respective fractional extremals,
to prove the extension of Noether's theorem to fractional problems
of the calculus of variations (Theorem~\ref{theo:tndf}) and
optimal control (Theorem~\ref{thm:mainResult})
that include both classical and Riemann--Liouville derivatives.
An example of application of our main result is given in Section~\ref{exa}.


\section{PRELIMINARIES ON FRACTIONAL CALCULUS}
\label{sec:fdRL}

In this section we fix notations by collecting the definitions
and properties of fractional integrals and derivatives needed in the sequel
\cite{CD:Agrawal:2007,CD:MilRos:1993,CD:Podlubny:1999,CD:SaKiMa:1993}.

\begin{definition}(Riemann--Liouville fractional integrals)
Let $f$ be a continuous function in the interval $[a,b]$. For $t \in
[a,b]$, the left Riemann--Liouville fractional integral $_aI_t^\alpha
f(t)$ and the right Riemann--Liouville fractional integral
$_tI_b^\alpha f(t)$ of order $\alpha$,  are defined by
\begin{gather*}
_aI_t^\alpha f(t) = \frac{1}{\Gamma(\alpha)}\int_a^t
(t-\theta)^{\alpha-1}f(\theta)d\theta,\\
_tI_b^\alpha f(t) = \frac{1}{\Gamma(\alpha)}\int_t^b
(\theta-t)^{\alpha-1}f(\theta)d\theta,
\end{gather*}
where $\Gamma$ is the Euler gamma function and $0 <\alpha < 1$.
\end{definition}

\begin{definition}(Fractional derivatives in the sense of
Riemann--Liouville)
Let $f$ be a continuous function in the interval
$[a,b]$. For $t \in [a,b]$, the left Riemann--Liouville fractional
derivative $_aD_t^\alpha f(t)$ and the right Riemann--Liouville
fractional derivative $_tD_b^\alpha f(t)$ of order $\alpha$ are
defined by
\begin{equation*}
\begin{split}
_aD_t^\alpha f(t)&={\frac{d}{dt}}\, {_aI_t^{1-\alpha}} f(t)\\
&= \frac{1}{\Gamma(1-\alpha)}\frac{d}{dt}
\int_a^t (t-\theta)^{-\alpha}f(\theta)d\theta
\end{split}
\end{equation*}
and
\begin{equation*}
\begin{split}
_tD_b^\alpha f(t) &= {-\frac{d}{dt}}\,{_tI_b^{1-\alpha}}f(t)\\
&= \frac{-1}{\Gamma(1-\alpha)}\frac{d}{dt}
\int_t^b(\theta -t)^{-\alpha}f(\theta)d\theta.
\end{split}
\end{equation*}
\end{definition}

\begin{theorem}
Let $f$ and $g$ be two continuous functions
on $[a,b]$. Then, for all $t \in [a,b]$, the following property holds:
$$
_aD_t^\alpha\left(f(t)+g(t)\right)
= {_aD_t^\alpha}f(t)+{_aD_t^\alpha}g(t).
$$
\end{theorem}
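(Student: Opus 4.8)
The plan is to prove linearity of the left Riemann--Liouville fractional derivative by unwinding the definition and reducing everything to linearity of operations we already know to be linear: the Riemann integral and ordinary differentiation. Recall from the preceding definition that $_aD_t^\alpha h(t) = \frac{d}{dt}\,{_aI_t^{1-\alpha}}h(t)$, so it suffices to establish (i) linearity of the fractional integral operator $_aI_t^{1-\alpha}$ and (ii) linearity of $\frac{d}{dt}$, and then compose.

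First I would treat the fractional integral. Fix $t\in[a,b]$ and write
\begin{equation*}
{_aI_t^{1-\alpha}}\bigl(f(t)+g(t)\bigr)
= \frac{1}{\Gamma(1-\alpha)}\int_a^t (t-\theta)^{-\alpha}\bigl(f(\theta)+g(\theta)\bigr)\,d\theta.
\end{equation*}
Since $f$ and $g$ are continuous on $[a,b]$ and the kernel $(t-\theta)^{-\alpha}$ is integrable on $[a,t]$ for $0<\alpha<1$, both $(t-\theta)^{-\alpha}f(\theta)$ and $(t-\theta)^{-\alpha}g(\theta)$ are integrable, so the additivity of the Riemann (improper) integral splits the right-hand side into $_aI_t^{1-\alpha}f(t) + {_aI_t^{1-\alpha}}g(t)$. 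Next I would apply $\frac{d}{dt}$ to this identity. The function $t\mapsto {_aI_t^{1-\alpha}}f(t)$ is differentiable (this is exactly what makes $_aD_t^\alpha f$ well defined under our standing hypotheses, and the same for $g$), so by linearity of the derivative,
\begin{equation*}
\frac{d}{dt}\Bigl({_aI_t^{1-\alpha}}f(t) + {_aI_t^{1-\alpha}}g(t)\Bigr)
= \frac{d}{dt}\,{_aI_t^{1-\alpha}}f(t) + \frac{d}{dt}\,{_aI_t^{1-\alpha}}g(t)
= {_aD_t^\alpha}f(t) + {_aD_t^\alpha}g(t).
\end{equation*}
Combining the two displays gives $_aD_t^\alpha(f(t)+g(t)) = {_aD_t^\alpha}f(t) + {_aD_t^\alpha}g(t)$, as claimed.

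The only delicate point — and the one I would be careful to state rather than gloss over — is the differentiability of $t\mapsto {_aI_t^{1-\alpha}}f(t)$: mere continuity of $f$ does not by itself guarantee it, so in the argument above one is implicitly assuming (as the definition of $_aD_t^\alpha$ already does) that $f$, $g$, and hence $f+g$ lie in the class of functions for which the Riemann--Liouville derivative exists. Granting that, there is no real obstacle here; the proof is a routine two-line reduction to additivity of the integral followed by additivity of the derivative, and I would present it essentially as the two displayed computations above.
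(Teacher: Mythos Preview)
The paper does not actually supply a proof for this theorem: it appears in the preliminaries (Section~\ref{sec:fdRL}) as a standard property of Riemann--Liouville derivatives, stated without argument and attributed implicitly to the references \cite{CD:Agrawal:2007,CD:MilRos:1993,CD:Podlubny:1999,CD:SaKiMa:1993}. Your proof is correct and is precisely the canonical one --- linearity of the integral followed by linearity of $d/dt$ --- and your caveat about the implicit assumption that $_aI_t^{1-\alpha}f$ and $_aI_t^{1-\alpha}g$ are differentiable is well placed, since the theorem as stated in the paper only assumes continuity of $f$ and $g$.
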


\begin{remark}
In general, the Riemann--Liouville fractional derivative of a constant is not
equal to zero.
\end{remark}

We now present the integration by parts formula for fractional
derivatives.

\begin{lemma}
If $f$, $g$, and the fractional derivatives
$_aD_t^\alpha g$ and $_tD_b^\alpha f$
are continuous at every point $t\in[a,b]$, then
\begin{equation}
\label{integracao:partes}
\int_{a}^{b}f(t) _aD_t^\alpha g(t)dt
=\int_a^b g(t) _tD_b^\alpha f(t)dt
\end{equation}
for any $0<\alpha<1$. Moreover,
formula \eqref{integracao:partes} is still
valid for $\alpha=1$ provided $f$ or $g$
are zero at $t=a$ and $t=b$.
\end{lemma}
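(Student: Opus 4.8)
The plan is to reduce the statement to two more elementary ingredients: the corresponding integration-by-parts rule for the Riemann--Liouville \emph{integrals} (a pure Fubini argument), and the classical integration-by-parts rule; the whole point will then be to keep careful track of the boundary contributions generated by the $\frac{d}{dt}$ that appears in the definitions of $_aD_t^\alpha$ and $_tD_b^\alpha$, and to check that they cancel.

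First I would prove the auxiliary identity $\int_a^b \phi(t)\,{_aI_t^{\beta}}\psi(t)\,dt = \int_a^b \psi(t)\,{_tI_b^{\beta}}\phi(t)\,dt$, valid for $0<\beta<1$ and $\phi,\psi$ continuous on $[a,b]$. This follows immediately from the definition: writing out ${_aI_t^\beta}\psi(t)$ and interchanging the order of integration over the triangle $\{a\le\theta\le t\le b\}$ — legitimate because the integrand is continuous off the diagonal and the singularity $(t-\theta)^{\beta-1}$ is integrable — turns the iterated integral into $\int_a^b \psi(\theta)\,\frac{1}{\Gamma(\beta)}\int_\theta^b (t-\theta)^{\beta-1}\phi(t)\,dt\,d\theta$, which is exactly the right-hand side.

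Next, using ${_aD_t^\alpha}g = \frac{d}{dt}\,{_aI_t^{1-\alpha}}g$ and integrating by parts in the ordinary sense,
\[
\int_a^b f(t)\,{_aD_t^\alpha}g(t)\,dt
= \Big[f(t)\,{_aI_t^{1-\alpha}}g(t)\Big]_a^b - \int_a^b f'(t)\,{_aI_t^{1-\alpha}}g(t)\,dt ,
\]
where the lower endpoint drops out since ${_aI_a^{1-\alpha}}g(a)=0$ for bounded $g$. To the remaining integral I apply the auxiliary identity with $\phi=f'$, $\psi=g$, $\beta=1-\alpha$, getting $\int_a^b g(t)\,{_tI_b^{1-\alpha}}f'(t)\,dt$. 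It then remains to rewrite ${_tI_b^{1-\alpha}}f'$ in terms of ${_tD_b^\alpha}f$: after the substitution $\theta\mapsto\theta-t$, differentiating ${_tI_b^{1-\alpha}}f(t)$ under the integral sign by the Leibniz rule (the moving upper limit produces a boundary term) yields $\frac{d}{dt}\,{_tI_b^{1-\alpha}}f(t) = {_tI_b^{1-\alpha}}f'(t) - \frac{(b-t)^{-\alpha}}{\Gamma(1-\alpha)}f(b)$, hence ${_tI_b^{1-\alpha}}f'(t) = \frac{(b-t)^{-\alpha}}{\Gamma(1-\alpha)}f(b) - {_tD_b^\alpha}f(t)$. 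Substituting this back, the extra term $\frac{f(b)}{\Gamma(1-\alpha)}\int_a^b (b-t)^{-\alpha}g(t)\,dt = f(b)\,{_aI_b^{1-\alpha}}g(b)$ cancels precisely the surviving upper-endpoint term $f(b)\,{_aI_b^{1-\alpha}}g(b)$, leaving exactly $\int_a^b g(t)\,{_tD_b^\alpha}f(t)\,dt$, as claimed.

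Finally, for the limiting case $\alpha=1$ one has ${_aD_t^1}g=g'$ and ${_tD_b^1}f=-f'$, so the formula collapses to $\int_a^b fg'\,dt = -\int_a^b f'g\,dt$, which is ordinary integration by parts once the hypothesis that $f$ or $g$ vanishes at both $a$ and $b$ annihilates $[fg]_a^b$. I do not expect any single deep step here; the main obstacle is the bookkeeping: justifying the two Fubini-type interchanges and the differentiation under the integral sign from the stated continuity (and the mild regularity on $f$ needed for the classical integration by parts step), and verifying that the several boundary contributions cancel exactly rather than leaving a residual term.
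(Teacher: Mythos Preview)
The paper does not prove this lemma: it is stated in Section~\ref{sec:fdRL} as a preliminary fact from the fractional-calculus literature (with references to Samko--Kilbas--Marichev and others), so there is no authors' proof to compare against.

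Your argument is the standard one and is correct. The Fubini step for the fractional integrals, the classical integration by parts on $f\cdot\frac{d}{dt}{_aI_t^{1-\alpha}}g$, and the Leibniz computation expressing ${_tI_b^{1-\alpha}}f'$ in terms of ${_tD_b^\alpha}f$ all check out, and the boundary term $f(b)\,{_aI_b^{1-\alpha}}g(b)$ indeed cancels exactly as you say. The only point worth flagging is the one you already flag yourself: your route uses $f'$, so it needs $f\in C^1$, which is marginally stronger than the continuity hypotheses stated in the lemma. For the uses made of the lemma in this paper (where $f=\partial_4 L$ with $L\in C^2$) that extra regularity is available, so this is harmless here; if you wanted to match the stated hypotheses exactly you could instead run Fubini directly on the double integral defining $\int_a^b f\,{_aD_t^\alpha}g\,dt$ and avoid ever writing $f'$.
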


The reader interested in additional background on
fractional calculus is referred to one of the many good books on the subject
\cite{book:Baleanu,CD:Hilfer:2000,Kilbas,CD:MilRos:1993,MR2768178,CD:Podlubny:1999,CD:SaKiMa:1993}.


\section{MAIN RESULTS: EULER--LAGRANGE EQUATIONS AND NOETHER'S THEOREMS
FOR VARIATIONAL PROBLEMS WITH CLASSICAL AND RIEMANN--LIOUVILLE DERIVATIVES}
\label{sec:delay}

In Section~\ref{sub1} we prove two important results for variational problems: a
necessary optimality condition of Euler--Lagrange type (Theorem~\ref{Thm:FractELeq1}) and a
Noether-type theorem (Theorem~\ref{theo:tndf}). The results are then extended in Section~\ref{foc}
to the more general setting of optimal control.


\subsection{Fractional variational problems with classical and Riemann--Liouville derivatives}
\label{sub1}

We begin by formulating the fundamental problem under investigation.

\begin{problem}
\label{Pb1}
The fractional problem of the calculus of variations
with classical and Riemann--Liouville derivatives
consists to find the stationary functions of the functional
\begin{equation}
\label{Pf}
I[q(\cdot)] = \int_a^b
L\left(t,q(t),\dot{q}(t),{_aD_t^\alpha} q(t)\right) dt
\end{equation}
subject to given $2n$ boundary conditions
$q(a)=q_{a}$ and $q(b)=q_{b}$,
where $[a,b] \subset \mathbb{R}$, $a<b$, $0 < \alpha< 1$, $\dot{q}=\frac{dq}{dt}$,
and the admissible functions $q: t \mapsto q(t)$ and the Lagrangian
$L: (t,q,v,v_l) \mapsto L(t,q,v,v_l)$ are assumed to be $C^2$:
\begin{gather*}
q(\cdot) \in C^2\left([a,b];\,\mathbb{R}^n \right)\text{;}\\
L(\cdot,\cdot,\cdot,\cdot) \in
C^2\left([a,b]\times\mathbb{R}^n\times
\mathbb{R}^n\times\mathbb{R}^n ;\,\mathbb{R}\right)\text{.}
\end{gather*}
\end{problem}

Along the work, we denote by $\partial_{i}L$ the partial derivative
of $L$ with respect to its $i$th argument, $i=1,\ldots,4$.


\subsubsection{Fractional Euler--Lagrange equations}

The Euler--Lagrange necessary optimality condition
 is central in achieving the main results of this work.
 Our results are formulated and proved using the Euler--Lagrange equations \eqref{eq:eldf}.

A variation of $q(\cdot) \in C^2\left([a,b];\,\mathbb{R}^n \right)$ is another function of
$C^2\left([a,b];\,\mathbb{R}^n \right)$ of the form $q+
\varepsilon h$, with $h(\cdot)  \in C^2\left([a,b];\,\mathbb{R}^n \right)$ such that
 $h(a)=h(b)=0$ and $\varepsilon$ a small real positive number.

\begin{definition}(Fractional extremal with classical and Riemann--Liouville derivatives).
\label{df1}
We say that $q(\cdot)$ is an extremal with classical and
Riemann--Liouville derivatives for funcional
\eqref{Pf} if for any $h\in C^2\left([a,b];\,\mathbb{R}^n \right)$
$$
\frac{d}{d\varepsilon}\left.I[q + \varepsilon
h]\right|_{\varepsilon = 0}=0.
$$
\end{definition}

We now obtain the fractional Euler--Lagrange necessary optimality condition.

\begin{theorem}(Fractional Euler--Lagrange equation).
\label{Thm:FractELeq1}
If $q(\cdot)$ is an  extremal to
Problem~\ref{Pb1}, then it satisfies the following
\emph{Euler--Lagrange equation with classical and Riemann--Liouville derivatives}:
\begin{multline}
\label{eq:eldf}
\partial_{2} L\left(t,q(t),\dot{q}(t),{_aD_t^\alpha} q(t)\right)
-\frac{d}{dt}\partial_{3} L\left(t,q(t),\dot{q}(t),{_aD_t^\alpha}
q(t)\right) \\+ {_tD_b^\alpha}\partial_{4}
L\left(t,q(t),\dot{q}(t),{_aD_t^\alpha} q(t)\right)  = 0,\quad t \in
[a,b]\,.
\end{multline}
\end{theorem}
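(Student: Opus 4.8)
The plan is to follow the classical variational argument, adapted to handle the extra Riemann--Liouville term via the fractional integration by parts formula. First I would take an arbitrary variation $q + \varepsilon h$ with $h \in C^2([a,b];\mathbb{R}^n)$ and $h(a) = h(b) = 0$, substitute it into the functional \eqref{Pf}, and differentiate under the integral sign with respect to $\varepsilon$, evaluating at $\varepsilon = 0$. Using the chain rule together with the linearity of the left Riemann--Liouville derivative (the addition theorem stated earlier, so that ${_aD_t^\alpha}(q + \varepsilon h) = {_aD_t^\alpha} q + \varepsilon \, {_aD_t^\alpha} h$), this yields the first-variation condition
\begin{equation*}
\int_a^b \left( \partial_2 L \cdot h + \partial_3 L \cdot \dot{h} + \partial_4 L \cdot {_aD_t^\alpha} h \right) dt = 0,
\end{equation*}
where each $\partial_i L$ is evaluated along $(t, q(t), \dot{q}(t), {_aD_t^\alpha} q(t))$.

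Next I would integrate by parts each of the two derivative terms so as to isolate $h$ as a common factor. For the classical term $\partial_3 L \cdot \dot{h}$ I use ordinary integration by parts, and since $h(a) = h(b) = 0$ the boundary contribution vanishes, leaving $-\int_a^b \frac{d}{dt}\partial_3 L \cdot h \, dt$. For the fractional term $\partial_4 L \cdot {_aD_t^\alpha} h$ I invoke the fractional integration by parts formula \eqref{integracao:partes} (Lemma above), with $f = \partial_4 L$ and $g = h$, which transforms it into $\int_a^b h \cdot {_tD_b^\alpha}\partial_4 L \, dt$; one must check that the hypotheses of that lemma hold, namely continuity of ${_aD_t^\alpha} h$ and ${_tD_b^\alpha}\partial_4 L$, which follows from the $C^2$ regularity assumptions on $q$, $h$, and $L$. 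Collecting terms, the first variation becomes
\begin{equation*}
\int_a^b \left( \partial_2 L - \frac{d}{dt}\partial_3 L + {_tD_b^\alpha}\partial_4 L \right) \cdot h \, dt = 0.
\end{equation*}

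Finally, since this must hold for every admissible $h$, I apply the fundamental lemma of the calculus of variations (du Bois-Reymond's lemma, in the form valid for vanishing-endpoint variations) componentwise to conclude that the bracketed expression vanishes identically on $[a,b]$, which is exactly \eqref{eq:eldf}. The main obstacle — really the only nonroutine point — is the justification of the fractional integration by parts step: one has to ensure the regularity hypotheses of the stated Lemma are met along the extremal, and in particular that ${_tD_b^\alpha}\partial_4 L$ is well-defined and continuous. Given the standing $C^2$ assumptions in Problem~\ref{Pb1} this is straightforward, but it is the step where the fractional nature of the problem genuinely enters, and everything else is a verbatim transcription of the classical derivation.
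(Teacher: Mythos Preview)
Your proposal is correct and follows essentially the same approach as the paper: derive the first-variation identity, apply classical integration by parts to the $\partial_3 L\cdot\dot h$ term and the fractional integration by parts formula \eqref{integracao:partes} to the $\partial_4 L\cdot{_aD_t^\alpha}h$ term (both boundary contributions vanishing because $h(a)=h(b)=0$), and conclude via the fundamental lemma of the calculus of variations. The only difference is that you are slightly more explicit about the regularity needed for the fractional integration by parts, which the paper leaves implicit.
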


\begin{proof}
According with Definition~\ref{df1},
a necessary condition for $q$ to be an extremal is given by
\begin{multline}
\label{pel}
\int_a^b\Bigl[\partial_{2} L\left(t,q,\dot{q},{_aD_t^\alpha}
q\right)\cdot h+\partial_{3} L\left(t,q,\dot{q},{_aD_t^\alpha}
q \right)\cdot \dot{h}\\
+\partial_{4} L\left(t,q,\dot{q},{_aD_t^\alpha}
q \right)\cdot{_aD_t^\alpha}h\Bigr] dt=0\,.
\end{multline}
Using the fact that $h(a)=h(b)=0$, and the classical
and Riemann--Liouville \eqref{integracao:partes}
integration by parts formulas in the second and third
terms of the integrand of \eqref{pel}, respectively, we obtain
\begin{multline*}
\int_a^b\Bigl[\partial_{2} L\left(t,q,\dot{q},{_aD_t^\alpha}
q\right)-\frac{d}{dt}\partial_{3} L\left(t,q,\dot{q},{_aD_t^\alpha} q\right)\\
+ {_tD_b^\alpha}\partial_{4} L\left(t,q,\dot{q},{_aD_t^\alpha}
q\right)\Bigr]\cdot h \,dt=0.
\end{multline*}
Equality \eqref{eq:eldf} follows from the application
of the fundamental lemma of the calculus of variations
(see, \textrm{e.g.}, \cite{CD:Gel:1963}).
\end{proof}


\subsubsection{Fractional Noether's theorem}

In order to prove a fractional Noether's theorem for Problem~\ref{Pb1}
we adopt a technique used in \cite{CD:FredericoTorres:2007,CD:Jost:1998}.
The proof is done in two steps: we begin by proving a Noether's theorem
without transformation of the time
(without transformation of the independent variable); then, using
a technique of time-reparametrization, we obtain Noether's
theorem in its general form.

\begin{definition}(Invariance without transforming the time).
\label{def:inv1:MR}
Functional \eqref{Pf} is invariant under an $\varepsilon$-parameter
group of infinitesimal transformations
$\bar{q}(t)= q(t) + \varepsilon\xi(t,q) + o(\varepsilon)$ if
\begin{multline}
\label{eq:invdf}
\int_{t_{a}}^{t_{b}} L\left(t,q(t),\dot{q}(t),{_aD_t^\alpha q(t)}\right) dt\\
= \int_{t_{a}}^{t_{b}} L\left(t,\bar{q}(t),\dot{\bar{q}}(t),{_aD_t^\alpha
\bar{q}(t)}\right) dt
\end{multline}
for any subinterval $[{t_{a}},{t_{b}}] \subseteq [a,b]$.
\end{definition}

The next theorem establishes a necessary condition of invariance.

\begin{theorem}(Necessary condition of invariance).
If functional \eqref{Pf} is invariant,
in the sense of Definition~\ref{def:inv1:MR}, then
\begin{multline}
\label{eq:cnsidf}
\partial_{2} L\left(t,q(t),\dot{q}(t),{_aD_t^\alpha q(t)}\right) \cdot \xi(t,q)\\
+ \partial_{3}L\left(t,q(t),\dot{q}(t),{_aD_t^\alpha q(t)}\right)\cdot \dot{\xi}(t,q)\\
+\partial_{4} L\left(t,q(t),\dot{q}(t),{_aD_t^\alpha q(t)}\right)
\cdot {_aD_t^\alpha \xi(t,q)}  = 0.
\end{multline}
\end{theorem}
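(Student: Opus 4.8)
The plan is to start from the invariance condition \eqref{eq:invdf} and differentiate with respect to the parameter $\varepsilon$ at $\varepsilon=0$, exactly as one does in the classical proof of Noether's necessary condition. Since the transformation is $\bar{q}(t)=q(t)+\varepsilon\xi(t,q)+o(\varepsilon)$, we have $\frac{d}{d\varepsilon}\bar{q}(t)\big|_{\varepsilon=0}=\xi(t,q)$, and because ordinary differentiation $\frac{d}{dt}$ and the left Riemann--Liouville derivative $_aD_t^\alpha$ are both linear operators (linearity of $_aD_t^\alpha$ is the Theorem stated just after the definition of the Riemann--Liouville derivatives), we also get $\frac{d}{d\varepsilon}\dot{\bar{q}}(t)\big|_{\varepsilon=0}=\dot{\xi}(t,q)$ and $\frac{d}{d\varepsilon}\,{_aD_t^\alpha}\bar{q}(t)\big|_{\varepsilon=0}={_aD_t^\alpha}\xi(t,q)$.

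Next I would note that the left-hand side of \eqref{eq:invdf} does not depend on $\varepsilon$, so its $\varepsilon$-derivative is zero; differentiating the right-hand side under the integral sign (legitimate by the $C^2$ regularity assumed on $L$ and on the admissible functions, which makes the integrand and its $\varepsilon$-derivative continuous on the compact domain) and applying the chain rule to $L\left(t,\bar{q}(t),\dot{\bar{q}}(t),{_aD_t^\alpha}\bar{q}(t)\right)$ yields
\begin{multline*}
\int_{t_a}^{t_b}\Bigl[\partial_2 L\left(t,q,\dot{q},{_aD_t^\alpha}q\right)\cdot\xi(t,q)
+\partial_3 L\left(t,q,\dot{q},{_aD_t^\alpha}q\right)\cdot\dot{\xi}(t,q)\\
+\partial_4 L\left(t,q,\dot{q},{_aD_t^\alpha}q\right)\cdot{_aD_t^\alpha}\xi(t,q)\Bigr]\,dt=0,
\end{multline*}
where all partial derivatives of $L$ are evaluated along $q$ (the value at $\varepsilon=0$).

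Finally, since this integral vanishes for \emph{every} subinterval $[t_a,t_b]\subseteq[a,b]$, the integrand itself must vanish identically on $[a,b]$: indeed, if a continuous function has zero integral over every subinterval, it is identically zero (equivalently, differentiate $t_b\mapsto\int_{t_a}^{t_b}(\cdot)\,dt$, or invoke the fundamental lemma of the calculus of variations as used in the proof of Theorem~\ref{Thm:FractELeq1}). This gives precisely \eqref{eq:cnsidf}. I do not anticipate a serious obstacle here; the one point requiring a little care is the justification of differentiation under the integral sign together with the interchange of $\frac{d}{d\varepsilon}$ with $\frac{d}{dt}$ and with $_aD_t^\alpha$ — the latter is clean because $_aD_t^\alpha$ is linear and, under the stated continuity hypotheses, the order of the $\varepsilon$- and $t$-operations can be exchanged. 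Everything else is the standard first-variation computation, now carried along the extra fractional argument of $L$.
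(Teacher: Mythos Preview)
Your proposal is correct and follows essentially the same approach as the paper: differentiate the invariance identity \eqref{eq:invdf} with respect to $\varepsilon$ at $\varepsilon=0$, use the linearity of $_aD_t^\alpha$ to identify the derivative of the fractional term as ${_aD_t^\alpha}\xi$, and exploit the fact that the identity holds on every subinterval to pass from the integral equality to a pointwise one. The only cosmetic difference is the order of operations --- the paper first drops the integrals (since the equality holds on every $[t_a,t_b]$) and then differentiates in $\varepsilon$, whereas you first differentiate under the integral and then localize; both routes give \eqref{eq:cnsidf}.
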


\begin{proof}
Having in mind that condition \eqref{eq:invdf}
is valid for any subinterval $[{t_{a}},{t_{b}}] \subseteq [a,b]$,
we can get rid of the integral signs in \eqref{eq:invdf}.
Differentiating this condition
with respect to $\varepsilon$, substituting $\varepsilon=0$,
and using the definitions and properties
of the Riemann--Liouville fractional derivatives given in
Section~\ref{sec:fdRL}, we arrive to
\begin{multline}
\label{eq:SP}
0 = \partial_{2} L\left(t,q,\dot{q},{_aD_t^\alpha}
q\right)\cdot\xi(t,q) +\partial_{3}L\left(t,q,\dot{q},{_aD_t^\alpha}
q\right)\cdot\dot{\xi}\\
+ \partial_{4} L\left(t,q,\dot{q},{_aD_t^\alpha}
q\right)
\cdot\frac{d}{d\varepsilon} \left[\frac{1}{\Gamma(1-\alpha)}
\frac{d}{dt}\int_a^t (t-\theta)^{-\alpha}q(\theta)d\theta\right. \\
+\left.\frac{\varepsilon}{\Gamma(1-\alpha)}\frac{d}{dt}\int_a^t
(t-\theta)^{-\alpha}\xi(\theta,q)d\theta\right]_{\varepsilon=0}\,.
\end{multline}
Expression \eqref{eq:SP} is equivalent to \eqref{eq:cnsidf}.
\end{proof}

\begin{remark}
Using the Euler--Lagrange equation \eqref{eq:eldf},
the necessary condition of invariance
\eqref{eq:cnsidf} is equivalent to
\begin{multline}
\label{eq:cnsidf11}
\xi(t,q)\cdot \frac{d}{dt}\partial_{3} L\left(t,q(t),\dot{q}(t),{_aD_t^\alpha q(t)}\right)\\
+ \partial_{3}L\left(t,q(t),\dot{q}(t),{_aD_t^\alpha q(t)}\right)\cdot \dot{\xi}(t,q)\\
+ \partial_{4} L\left(t,q(t),\dot{q}(t),{_aD_t^\alpha q(t)}\right)
\cdot {_aD_t^\alpha \xi(t,q)}\\
- \xi(t,q)\cdot {_tD_b^\alpha}\partial_{4} L\left(t,q(t),\dot{q}(t),{_aD_t^\alpha q(t)}\right) = 0 \, .
\end{multline}
\end{remark}

In \cite{CD:BoCrGr}, the following theorem is proved.

\begin{theorem}(Transfer formula \cite{CD:BoCrGr}).
\label{trfo}
Consider functions $f,g\in C^{\infty}\left([a,b];\mathbb{R}^n\right)$
and assume the following condition $(\mathcal{C})$:
the sequences $\left(g^{(k)}\cdot
_aI_t^{k-\alpha}f\right)_{k\in \mathbb{N}\setminus\{0\}}$ and
$\left(f^{(k)}\cdot _tI_b^{k-\alpha}g\right)_{k\in
\mathbb{N}\setminus\{0\}}$ converge uniformly to $0$ on $[a,b]$.
Then, the following equality holds:
\begin{multline*}
g\cdot {_aD_t^\alpha}f-f\cdot{_tD_b^\alpha}g\\
=\frac{d}{dt}\left[\sum_{r=0}^{\infty}\left((-1)^{r}g^{(r)}
\cdot {_aI_t}^{r+1-\alpha}f+f^{(r)}
\cdot {_tI_b}^{r+1-\alpha}g\right)\right].
\end{multline*}
\end{theorem}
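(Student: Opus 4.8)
The plan is to peel the two fractional derivatives apart one ordinary derivative at a time, which produces a finite-order \emph{telescoping} identity, and then to pass to the limit $N\to\infty$, where hypothesis $(\mathcal{C})$ carries all the analytic weight. The only ingredients are the Leibniz product rule and the elementary fact---obtained by differentiating under the integral sign---that differentiating a left Riemann--Liouville integral lowers its order by one, $\frac{d}{dt}\,{_aI_t^{\beta}}h = {_aI_t^{\beta-1}}h$ for $\beta>1$, the case $\beta=1-\alpha$ being the definition $\frac{d}{dt}\,{_aI_t^{1-\alpha}}h = {_aD_t^{\alpha}}h$ itself; symmetrically $\frac{d}{dt}\,{_tI_b^{\beta}}h = -{_tI_b^{\beta-1}}h$ for $\beta>1$ and $\frac{d}{dt}\,{_tI_b^{1-\alpha}}h = -{_tD_b^{\alpha}}h$. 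Since $f,g\in C^{\infty}$, all the objects below are well defined.

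First I would treat the left term. Combining the product rule with the order-lowering rule gives, for every $r\ge 0$,
\[
g^{(r)}\cdot{_aI_t^{r-\alpha}}f
= \frac{d}{dt}\Bigl[g^{(r)}\cdot{_aI_t^{r+1-\alpha}}f\Bigr] - g^{(r+1)}\cdot{_aI_t^{r+1-\alpha}}f,
\]
where for $r=0$ the left-hand side reads $g\cdot{_aD_t^{\alpha}}f$. Feeding the $r=1$ relation into the remainder of the $r=0$ relation, then the $r=2$ relation, and so on, the $\frac{d}{dt}$-terms accumulate with alternating signs while the residual products telescope, so that for every $N$
\begin{multline*}
g\cdot{_aD_t^{\alpha}}f
= \frac{d}{dt}\Bigl[\sum\nolimits_{r=0}^{N}(-1)^{r}\,g^{(r)}\cdot{_aI_t^{r+1-\alpha}}f\Bigr]\\
+ (-1)^{N+1}\,g^{(N+1)}\cdot{_aI_t^{N+1-\alpha}}f .
\end{multline*}
An entirely analogous computation with the right integral gives
\begin{multline*}
f\cdot{_tD_b^{\alpha}}g
= -\frac{d}{dt}\Bigl[\sum\nolimits_{r=0}^{N}f^{(r)}\cdot{_tI_b^{r+1-\alpha}}g\Bigr]\\
+ f^{(N+1)}\cdot{_tI_b^{N+1-\alpha}}g .
\end{multline*}

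Subtracting the two identities, the derivative terms assemble into $\frac{d}{dt}$ of the $N$th partial sum of the series in the statement, and the leftover term is $(-1)^{N+1}g^{(N+1)}\cdot{_aI_t^{N+1-\alpha}}f - f^{(N+1)}\cdot{_tI_b^{N+1-\alpha}}g$. By $(\mathcal{C})$ with $k=N+1$, both pieces converge uniformly to $0$ on $[a,b]$ as $N\to\infty$, so the derivatives of the partial sums converge uniformly to $g\cdot{_aD_t^{\alpha}}f - f\cdot{_tD_b^{\alpha}}g$. I would then invoke the classical theorem on termwise differentiation of a function series: since the term-by-term derivatives converge uniformly and the series itself converges (again a consequence of $(\mathcal{C})$), the sum of the series is differentiable and its derivative is the uniform limit of the partial derivatives, which is exactly the asserted equality. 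I expect the real obstacle to be this last step---legitimating the interchange of $\frac{d}{dt}$ with the infinite summation, i.e.\ simultaneously controlling the convergence of the series and of its derived series; the remainder of the argument is routine bookkeeping, and it is precisely to make this interchange work that hypothesis $(\mathcal{C})$ is imposed (see \cite{CD:BoCrGr}).
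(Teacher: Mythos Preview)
The paper does not give its own proof of this theorem: it is quoted verbatim from \cite{CD:BoCrGr} with the preamble ``In \cite{CD:BoCrGr}, the following theorem is proved,'' and no argument is supplied. So there is nothing in the present paper to compare your proposal against.

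That said, your argument is precisely the one used in \cite{CD:BoCrGr}: iterated integration by parts (product rule plus the order-lowering identity $\frac{d}{dt}\,{_aI_t^{\beta}}h={_aI_t^{\beta-1}}h$) yields the finite telescoping identity with remainder $(-1)^{N+1}g^{(N+1)}\cdot{_aI_t^{N+1-\alpha}}f - f^{(N+1)}\cdot{_tI_b^{N+1-\alpha}}g$, and condition~$(\mathcal{C})$ kills the remainder uniformly. Your identification of the genuine analytic issue---justifying the interchange of $\frac{d}{dt}$ with the infinite sum---is exactly right. One small refinement: you assert that convergence of the series itself ``is again a consequence of $(\mathcal{C})$,'' but $(\mathcal{C})$ only says the \emph{terms} tend to zero, not that the series converges. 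What actually closes the argument is the classical theorem you invoke: uniform convergence of the partial-sum derivatives $S_N'$ (which you have established) together with convergence of $S_N$ at a single point (e.g.\ $t=a$, where each ${_aI_a^{r+1-\alpha}}f=0$ and the right-integral part is handled similarly via $t=b$, or one simply integrates the uniformly convergent $S_N'$ from a fixed base point) forces uniform convergence of $S_N$ and the desired identity for its derivative.
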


\begin{theorem}(Fractional Noether's theorem without transformation of time).
\label{theo:tnadf1}
If functional \eqref{Pf} is invariant, in the sense of
Definition~\ref{def:inv1:MR}, and functions $\xi$ and $\partial_{4} L$
satisfy condition $(\mathcal{C})$ of Theorem~\ref{trfo}, then
\begin{multline*}
\frac{d}{dt}\Biggl[\xi\cdot\partial_{3} L
+\sum_{r=0}^{\infty}\Bigl((-1)^{r}\partial_{4} L^{(r)}
\cdot {_aI_t}^{r+1-\alpha}\xi\\
+\xi^{(r)}\cdot {_tI_b}^{r+1-\alpha}\partial_{4} L\Bigr)\Biggr] = 0
\end{multline*}
along any fractional extremal with classical and
Riemann--Liouville derivatives $q(t)$, $t \in
[a,b]$ (Definition~\ref{df1}).
\end{theorem}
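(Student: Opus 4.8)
The plan is to start from the necessary condition of invariance in the form \eqref{eq:cnsidf11}, which already incorporates the Euler--Lagrange equation and isolates the two ``fractional'' terms
\[
\partial_{4} L\cdot {_aD_t^\alpha}\xi - \xi\cdot {_tD_b^\alpha}\partial_{4} L.
\]
The key observation is that this is exactly the left-hand side of the transfer formula of Theorem~\ref{trfo} with $f=\xi$ and $g=\partial_{4} L$. Since we assume $\xi$ and $\partial_{4} L$ satisfy condition $(\mathcal{C})$, I would simply substitute
\[
\partial_{4} L\cdot {_aD_t^\alpha}\xi - \xi\cdot {_tD_b^\alpha}\partial_{4} L
=\frac{d}{dt}\left[\sum_{r=0}^{\infty}\left((-1)^{r}\partial_{4} L^{(r)}\cdot {_aI_t}^{r+1-\alpha}\xi+\xi^{(r)}\cdot {_tI_b}^{r+1-\alpha}\partial_{4} L\right)\right]
\]
into \eqref{eq:cnsidf11}.

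Next I would recognize the remaining two terms of \eqref{eq:cnsidf11},
\[
\xi\cdot\frac{d}{dt}\partial_{3} L + \partial_{3} L\cdot\dot\xi,
\]
as the total derivative $\frac{d}{dt}\left(\xi\cdot\partial_{3} L\right)$ by the Leibniz rule. Combining the two total-derivative expressions gives
\[
\frac{d}{dt}\left[\xi\cdot\partial_{3} L+\sum_{r=0}^{\infty}\left((-1)^{r}\partial_{4} L^{(r)}\cdot {_aI_t}^{r+1-\alpha}\xi+\xi^{(r)}\cdot {_tI_b}^{r+1-\alpha}\partial_{4} L\right)\right]=0,
\]
which is precisely the asserted conservation law, valid along any extremal in the sense of Definition~\ref{df1} since \eqref{eq:cnsidf11} holds there.

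I do not expect any serious obstacle: the proof is essentially a two-line assembly once the right form \eqref{eq:cnsidf11} of the invariance identity and the transfer formula are both in hand. The only point requiring a little care is bookkeeping the hypotheses---one must note that the functions $q$, $\xi$, and $\partial_{4}L$ are smooth enough (the $C^2$ regularity in Problem~\ref{Pb1}, plus whatever smoothness is implicitly needed to write $\partial_4 L^{(r)}$ and to invoke Theorem~\ref{trfo}, which is stated for $C^{\infty}$ functions) and that condition $(\mathcal{C})$ is assumed outright for the pair $(\xi,\partial_4 L)$. If one wants to be scrupulous, a remark could acknowledge that Theorem~\ref{trfo} as quoted demands $C^{\infty}$ regularity while Problem~\ref{Pb1} only assumes $C^2$; in the context of this paper this is treated as a formal/working hypothesis, so I would simply carry it along rather than try to weaken it.
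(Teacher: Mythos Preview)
Your proposal is correct and follows exactly the same approach as the paper, whose proof is the single sentence ``We combine equation \eqref{eq:cnsidf11} and Theorem~\ref{trfo}.'' You have simply unpacked this combination in detail---applying the transfer formula with $f=\xi$, $g=\partial_4 L$ and using the Leibniz rule on the $\partial_3 L$ terms---which is precisely what the authors intend; your remark about the $C^2$ versus $C^\infty$ regularity mismatch is a fair side observation but does not affect the argument.
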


\begin{proof}
We combine equation \eqref{eq:cnsidf11} and Theorem~\ref{trfo}.
\end{proof}

The next definition gives a more general notion
of invariance for the integral functional \eqref{Pf}.
The main result of this section, the
Theorem~\ref{theo:tndf}, is formulated
with the help of this definition.

\begin{definition}(Invariance of \eqref{Pf}).
\label{def:invadf}
Functional \eqref{Pf} is said to be invariant
under the $\varepsilon$-parameter group of infinitesimal transformations
$\bar{t} = t + \varepsilon\tau(t,q(t)) + o(\varepsilon)$ and
$\bar{q}(t) = q(t) + \varepsilon\xi(t,q(t)) + o(\varepsilon)$ if
\begin{multline*}
\int_{t_{a}}^{t_{b}} L\left(t,q(t),\dot{q}(t),{_aD_t^\alpha q(t)}\right) dt\\
= \int_{\bar{t}(t_a)}^{\bar{t}(t_b)} L\left(\bar{t},\bar{q}(\bar{t}),\dot{\bar{q}}(\bar{t})
{_aD_{\bar{t}}^\alpha \bar{q}(\bar{t})}\right) d\bar{t}
\end{multline*}
for any subinterval $[{t_{a}},{t_{b}}] \subseteq [a,b]$.
\end{definition}

Our next result gives a general form of Noether's theorem for fractional
problems of the calculus of variations with classical
and Riemann--Liouville derivatives.

\begin{theorem}(Fractional Noether's theorem with classical and Riemann--Liouville derivatives).
\label{theo:tndf}
If functional \eqref{Pf} is invariant,
in the sense of Definition~\ref{def:invadf}, and functions
$\xi$ and $\partial_{4} L$ satisfy condition $(\mathcal{C})$
of Theorem~\ref{trfo}, then
\begin{multline}
\label{eq:LC:Frac:RL1}
\frac{d}{dt}\Biggl[\xi\cdot\partial_{3}
L+\sum_{r=0}^{\infty}\Bigl((-1)^{r}\partial_{4} L^{(r)}
\cdot {_aI_t}^{r+1-\alpha}\xi\\
+\xi^{(r)}\cdot {_tI_b}^{r+1-\alpha}\partial_{4} L\Bigr)\\
+ \tau\left(L-\dot{q}\cdot\partial_{3} L
-\alpha\partial_{4} L\cdot{_{a}D_{t}^{\alpha}}q\right)\Biggr]= 0
\end{multline}
along any fractional extremal with classical and
Riemann--Liouville derivatives $q(t)$, $t \in [a,b]$.
\end{theorem}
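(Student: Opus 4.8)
The plan is to reduce the general invariance statement of Definition~\ref{def:invadf} to the time-independent case already handled in Theorem~\ref{theo:tnadf1}, using a standard reparametrization of the independent variable. First I would introduce a Lipschitzian change of variable $t = \sigma(\lambda)$, $\lambda \in [\sigma_a,\sigma_b]$, with $t = \sigma(\lambda)$ and $\frac{dt}{d\lambda} = \sigma'(\lambda)$, chosen so that $\sigma(\sigma_a)=t_a$, $\sigma(\sigma_b)=t_b$, and then treat $t$ itself as a dependent variable $t(\lambda)$ alongside $q(t(\lambda))$. Writing $q^\sigma(\lambda) := q(\sigma(\lambda))$ and using the identity $\dot q = \frac{dq/d\lambda}{dt/d\lambda}$, the functional $I[q(\cdot)]$ over $[t_a,t_b]$ becomes an autonomous-in-$\lambda$ functional
\begin{equation*}
\bar I[t(\cdot),q^\sigma(\cdot)] = \int_{\sigma_a}^{\sigma_b} L\!\left(t(\lambda),q^\sigma(\lambda),\frac{(q^\sigma)'(\lambda)}{t'(\lambda)},\,{_aD_t^\alpha}q(t(\lambda))\right) t'(\lambda)\,d\lambda .
\end{equation*}
The key technical point is how the Riemann--Liouville term transforms: I would invoke the fact (used implicitly in \cite{CD:FredericoTorres:2007}) that under this reparametrization ${_aD_t^\alpha}q(t)$ pulls back in such a way that the new Lagrangian $\bar L$ has a fourth slot carrying the fractional derivative with respect to $\lambda$, so that condition $(\mathcal{C})$ is preserved and Theorem~\ref{theo:tnadf1} applies to $\bar L$ in the enlarged configuration space $(t,q)$.

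Next I would observe that the original $\varepsilon$-parameter group $\bar t = t + \varepsilon\tau + o(\varepsilon)$, $\bar q = q + \varepsilon\xi + o(\varepsilon)$ becomes, in the $(\lambda; t, q)$ picture, a transformation that does \emph{not} act on the new independent variable $\lambda$ — it acts only on the dependent variables, with generators $\tau(t,q)$ in the $t$-component and $\xi(t,q)$ in the $q$-component. Hence invariance in the sense of Definition~\ref{def:invadf} is exactly invariance of $\bar I$ in the sense of Definition~\ref{def:inv1:MR} (no transformation of time $\lambda$). Applying Theorem~\ref{theo:tnadf1} to $\bar I$ therefore yields a conserved quantity which is the sum of the $q$-contribution $\xi\cdot\partial_3 L + \sum_{r=0}^{\infty}\bigl((-1)^r \partial_4 L^{(r)}\cdot {_aI_t}^{r+1-\alpha}\xi + \xi^{(r)}\cdot {_tI_b}^{r+1-\alpha}\partial_4 L\bigr)$ and the $t$-contribution, the latter being $\tau$ multiplied by the momentum conjugate to $t'(\lambda)$ evaluated at $\sigma(\lambda)=t$.

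The remaining step is to identify that conjugate momentum. Differentiating $\bar L = L(t, q^\sigma, (q^\sigma)'/t', \ldots)\, t'$ with respect to $t'$ and setting $t'=1$ (i.e. $\sigma = \mathrm{id}$, which is legitimate since the conservation law is an identity in $\lambda$ that we may evaluate along $\sigma(\lambda)=\lambda$) gives $L - \dot q\cdot\partial_3 L$ from the classical part, by the usual computation; the fractional slot contributes the extra term $-\alpha\,\partial_4 L\cdot{_aD_t^\alpha}q$, which is precisely the energy-type correction coming from the scaling behaviour $_aD_{ct}^\alpha f(ct) = c^\alpha\, ({_aD_t^\alpha}f)(ct)$-type homogeneity of the Riemann--Liouville operator under dilation of $t$. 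Collecting the two contributions reproduces \eqref{eq:LC:Frac:RL1} after returning to the variable $t$. I expect the main obstacle to be the careful justification of the pull-back of ${_aD_t^\alpha}q$ under the nonlinear time change and the consequent verification that $\bar\xi = \xi$ and $\bar L$'s fourth-slot data still satisfy condition $(\mathcal{C})$, together with the clean extraction of the $-\alpha\,\partial_4 L\cdot{_aD_t^\alpha}q$ term; the classical $L - \dot q\cdot\partial_3 L$ part is routine.
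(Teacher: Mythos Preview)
Your proposal is correct and follows essentially the same route as the paper: reparametrize time so that $t$ becomes a dependent variable, observe that invariance in the sense of Definition~\ref{def:invadf} becomes invariance of the autonomous functional in the sense of Definition~\ref{def:inv1:MR}, apply Theorem~\ref{theo:tnadf1}, and then identify the momentum conjugate to $t'$ at $t'=1$ as $L-\dot q\cdot\partial_3 L-\alpha\,\partial_4 L\cdot{_aD_t^\alpha}q$. The only refinement the paper makes explicit is that the reparametrization is taken to be a linear dilation $t=\sigma f(\lambda)$ with constant $t'_\sigma=f(\lambda)$, which is exactly what makes the pull-back of ${_aD_t^\alpha}q$ computable as $(t'_\sigma)^{-\alpha}\,{_{a/(t'_\sigma)^2}D_\sigma^\alpha}q$ and yields the $-\alpha$ factor you anticipated from the homogeneity of the Riemann--Liouville operator.
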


\begin{proof}
Our proof is an extension of the method used in \cite{CD:Jost:1998}.
For that we reparametrize the time (the independent variable $t$)
by the Lipschitz transformation
\begin{equation*}
[a,b]\ni t\longmapsto \sigma f(\lambda) \in [\sigma_{a},\sigma_{b}]
\end{equation*}
that satisfies
\begin{equation}
\label{eq:condla}
t_{\sigma}^{'} =\frac{dt(\sigma)}{d\sigma}=
f(\lambda) = 1\,\, if\,\, \lambda=0\,.
\end{equation}
Functional \eqref{Pf} is reduced, in this way,
to an autonomous functional:
\begin{multline}
\label{eq:tempo}
\bar{I}[t(\cdot),q(t(\cdot))]\\
= \int_{\sigma_{a}}^{\sigma_{b}}
L\left(t(\sigma),q(t(\sigma)),\dot{q}(t(\sigma)),
{_{\sigma_{a}}D_{t(\sigma)}^{\alpha}q(t(\sigma))}
\right)t_{\sigma}^{'} d\sigma ,
\end{multline}
where $t(\sigma_{a}) = a$ and $t(\sigma_{b}) = b$. Using the
definitions and properties of fractional derivatives given in
Section~\ref{sec:fdRL}, we get successively that
\begin{equation*}
\begin{split}
_{\sigma_{a}}&D_{t(\sigma)}^{\alpha}q(t(\sigma))\\
&=
\frac{1}{\Gamma(1-\alpha)}\frac{d}{dt(\sigma)}
\int_{\frac{a}{f(\lambda)}}^{\sigma f(\lambda)}\left({\sigma
f(\lambda)}-\theta\right)^{-\alpha}q\left(\theta f^{-1}(\lambda)\right)d\theta    \\
&=
\frac{(t_{\sigma}^{'})^{-\alpha}}{\Gamma(1-\alpha)}
\frac{d}{d\sigma}
\int_{\frac{a}{(t_{\sigma}^{'})^{2}}}^{\sigma}
(\sigma-s)^{-\alpha}q(s)ds  \\
&= (t_{\sigma}^{'})^{-\alpha}{_{\frac{a}{(t_{\sigma}^{'})^{2}}}
D_{\sigma}^{\alpha}q(\sigma)} \, .
\end{split}
\end{equation*}
We then have
\begin{equation*}
\begin{split}
&\bar{I}[t(\cdot),q(t(\cdot))] \\
&= \int_{\sigma_{a}}^{\sigma_{b}}
L\left(t(\sigma),q(t(\sigma)),\frac{q_{\sigma}^{'}}{t_{\sigma}^{'}},(t_{\sigma}^{'})^{-\alpha}{_{\frac{a}{(t_{\sigma}^{'})^{2}}}
D_{\sigma}^{\alpha}}q(\sigma)\right) t_{\sigma}^{'} d\sigma \\
&\doteq \int_{\sigma_{a}}^{\sigma_{b}}
\bar{L}_{f}\left(t(\sigma),q(t(\sigma)),q_{\sigma}^{'},t_{\sigma}^{'},{_{\frac{a}{(t_{\sigma}^{'})^{2}}}
D_\sigma^\alpha} q(t(\sigma))\right)d\sigma \\
&= \int_a^b L\left(t,q(t),\dot{q}(t),{_aD_t^\alpha} q(t)\right) dt \\
&= I[q(\cdot)] \, .
\end{split}
\end{equation*}
If the integral functional \eqref{Pf} is invariant in the sense of
Definition~\ref{def:invadf}, then the integral functional
\eqref{eq:tempo} is invariant in the sense of
Definition~\ref{def:inv1:MR}. It follows from
Theorem~\ref{theo:tnadf1} that
\begin{multline}
\label{eq:tnadf2}
\frac{d}{dt}\Biggl[\xi\cdot\partial_{3}
\bar{L}_{f}+\tau\frac{\partial}{\partial t'_\sigma} \bar{L}_{f}
+\sum_{r=0}^{\infty}\Bigl((-1)^{r}
\partial_{5} \bar{L}_{f}^{(r)}\cdot {_aI_t}^{r+1-\alpha}\xi\\
+\xi^{(r)}\cdot {_tI_b}^{r+1-\alpha}\partial_{5} \bar{L}_{f}\Bigr)\Biggr]= 0 \, .
\end{multline}
For $\lambda = 0$, the condition \eqref{eq:condla} allow us to write that
\begin{equation*}
_{\frac{a}{(t_{\sigma}^{'})^{2}}}D_\sigma^\alpha q(t(\sigma))
 = {_aD_t}^\alpha q(t)
\end{equation*}
and, therefore, we get
\begin{equation}
\label{eq:prfMR:q1}
\begin{cases}
\partial_{3}\bar{L}_{f}
=\partial_{3} L,\\
\partial_{5}\bar{L}_{f}
=\partial_{4} L,
\end{cases}
\end{equation}
and
\begin{equation}
\label{eq:prfMR:q2}
\begin{split}
&\frac{\partial}{\partial t'_\sigma} \bar{L}_{f}
= L + \partial_{3}{\bar{L}_{f}}
\cdot t_{\sigma}^{'} \frac{\partial}{\partial t_{\sigma}^{'}}
\frac{q_{\sigma}^{'}}{t_{\sigma}^{'}}+\partial_{4}{\bar{L}_{f}}\\
&\times \frac{\partial}{\partial t_{\sigma}^{'}}\left[
\frac{(t_{\sigma}^{'})^{-\alpha}}{\Gamma(n-\alpha)}\left(\frac{d}{d\sigma}\right)^{n}
\int_{\frac{a}{(t_{\sigma}^{'})^{2}}}^{\sigma}
(\sigma-s)^{n-\alpha-1}q(s)ds\right]t_{\sigma}^{'}\\
&=-\dot{q}\cdot\partial_{3} L -\alpha\partial_{4} L\cdot{_{a}D_{t}^{\alpha}}q
+ L \, .
\end{split}
\end{equation}
We obtain \eqref{eq:LC:Frac:RL1}
substituting \eqref{eq:prfMR:q1} and \eqref{eq:prfMR:q2}
into equation \eqref{eq:tnadf2}.
\end{proof}


\subsection{Fractional optimal control problems with classical and Riemann--Liouville derivatives}
\label{foc}

We now adopt the Hamiltonian formalism in order to generalize the
Noether type results found in
\cite{CD:Djukic:1972,CD:JMS:Torres:2002a} for the more general context
of fractional optimal control problems with classical and Riemann--Liouville derivatives.
For this, we make use of our Noether's Theorem~\ref{theo:tndf} and
the standard Lagrange multiplier technique (\textrm{cf.}
\cite{CD:Djukic:1972}). The fractional optimal control problem with classical
and Riemann--Liouville derivatives is introduced, without loss of
generality, in Lagrange form:
\begin{equation}
\label{eq:COA}
I[q(\cdot),u(\cdot),\mu(\cdot)]
= \int_a^b L\left(t,q(t),u(t),\mu(t)\right) dt \longrightarrow \min
\end{equation}
subject to the differential system
\begin{gather}
\label{eq:sitRL1}
\dot{q}(t)=\varphi\left(t,q(t),u(t)\right),\\
\label{eq:sitRL}
 _aD_t^\alpha
q(t)=\rho\left(t,q(t),\mu(t)\right)
\end{gather}
and initial condition
\begin{equation}
\label{eq:COIRL}
q(a)=q_a\, .
\end{equation}
The Lagrangian $L :[a,b] \times \mathbb{R}^{n}\times
\mathbb{R}^{m}\times \mathbb{R}^{d}\rightarrow \mathbb{R}$, the velocity vector
$\varphi:[a,b] \times \mathbb{R}^{n}\times \mathbb{R}^m\rightarrow
\mathbb{R}^{n}$ and the fractional velocity vector
$\rho:[a,b] \times \mathbb{R}^{n}\times \mathbb{R}^d\rightarrow
\mathbb{R}^{n}$ are assumed to be functions of class $C^{1}$ with
respect to all their arguments. We also assume, without loss of
generality, that $0<\alpha<1$. In conformity with the calculus
of variations, we are considering that the control functions
$u(\cdot)$ and $\mu(\cdot)$ take values on an open set of $\mathbb{R}^m$
and $\mathbb{R}^d$, respectively.

\begin{remark}
\label{rem:cv:pc}
The fractional functional of the calculus of variations
with classical and Riemann--Liouville derivatives \eqref{Pf}
is obtained from \eqref{eq:COA}--\eqref{eq:sitRL} choosing
$\varphi(t,q,u)=u$ and $\rho(t,q,\mu)=\mu$.
\end{remark}


\subsubsection{Fractional Pontryagin Maximum Principle}

In this subsection we prove a fractional maximum principle
with the help of the Euler--Lagrange equations \eqref{eq:eldf}.

\begin{definition}(Process with classical and Riemann--Liouville derivatives).
\label{pros}
An admissible triplet $(q(\cdot),u(\cdot),\mu(\cdot))$ that satisfies the
control system \eqref{eq:sitRL1}--\eqref{eq:sitRL} of the
optimal control problem \eqref{eq:COA}--\eqref{eq:COIRL},
$t \in [a,b]$, is said to be a \emph{process with classical and Riemann--Liouville derivatives}.
\end{definition}

We now formulate the Fractional Pontryagin Maximum Principle
for problems with classical and Riemann--Liouville derivatives.
For convenience of notation, we introduce the following operator:
$$
[q,u,\mu,p,p_{\alpha}](t) = \left(t, q(t), u(t), \mu(t), p(t), p_{\alpha}(t)\right)
$$

\begin{theorem}(Fractional Pontryagin Maximum Principle).
\label{th:AG}
If $(q(\cdot),u(\cdot),\mu(\cdot))$ is a process
for problem \eqref{eq:COA}--\eqref{eq:COIRL}, in the sense of Definition~\ref{pros},
then there exists co-vector functions $p(\cdot)\in PC^{1}([a,b];\mathbb{R}^{n})$
and $p_{\alpha}(\cdot)\in PC^{1}([a,b];\mathbb{R}^{n})$
such that for all $t\in [a,b]$ the
quadruple  $(q(\cdot),u(\cdot),p(\cdot),p_{\alpha}(\cdot))$ satisfies
the following conditions:
\begin{itemize}
\item the Hamiltonian system
\begin{equation*}
\label{eq:HamRL}
\begin{cases}
\partial_5 {\cal H}[q,u,\mu,p,p_{\alpha}](t)=\dot{q}(t)\,  ,\\
\partial_6 {\cal H}[q,u,\mu,p,p_{\alpha}](t)={_aD_t^\alpha} q(t)  \, , \\
\partial_2{\cal H}[q,u,\mu,p,p_{\alpha}](t)=-\dot{p}(t)+_tD_b^\alpha p_{\alpha}(t)  \, ;
\end{cases}
\end{equation*}
\item the stationary conditions
\begin{gather*}
 \partial_3 {\cal H}[q,u,\mu,p,p_{\alpha}](t)=0 \, ,\\
 \partial_4 {\cal H}[q,u,\mu,p,p_{\alpha}](t)=0 \, ;
\end{gather*}
\end{itemize}
where the Hamiltonian ${\cal H}$ is given by
\begin{multline}
\label{eq:HL}
{\cal H}\left(t,q,u,\mu,p,p_{\alpha}\right) \\
= L\left(t,q,u,\mu\right) + p \cdot \varphi\left(t,q,u\right)
+p_{\alpha}\cdot \rho(t,q,\mu) \, .
\end{multline}
\end{theorem}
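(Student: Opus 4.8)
The plan is to derive the Fractional Pontryagin Maximum Principle by rewriting the constrained optimal control problem \eqref{eq:COA}--\eqref{eq:COIRL} as an unconstrained fractional variational problem of the type treated in Problem~\ref{Pb1}, and then apply the Euler--Lagrange equation \eqref{eq:eldf} from Theorem~\ref{Thm:FractELeq1}. First I would introduce Lagrange multipliers $p(\cdot)$ and $p_\alpha(\cdot)$ and adjoin the two dynamic constraints \eqref{eq:sitRL1} and \eqref{eq:sitRL} to the cost, forming the augmented functional
\begin{equation*}
J = \int_a^b \Bigl[ \mathcal{H}(t,q,u,\mu,p,p_\alpha) - p\cdot\dot q - p_\alpha \cdot {_aD_t^\alpha} q \Bigr]\, dt,
\end{equation*}
where $\mathcal{H}$ is exactly the Hamiltonian \eqref{eq:HL}. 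The key observation is that $J$ has precisely the form \eqref{Pf} with an extended Lagrangian depending on the variables $(q,u,\mu,p,p_\alpha)$, their classical derivatives, and the Riemann--Liouville derivative ${_aD_t^\alpha} q$, so that stationarity of $J$ with respect to each of the independent functions is governed by \eqref{eq:eldf}.

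Next I would write out the Euler--Lagrange equation \eqref{eq:eldf} for each group of variables separately. Varying with respect to $p$ and $p_\alpha$ (which appear without derivatives) reproduces the control system, giving the first two equations of the Hamiltonian system, namely $\partial_5\mathcal{H} = \dot q$ and $\partial_6\mathcal{H} = {_aD_t^\alpha} q$. Varying with respect to $q$ is the substantive case: here $q$ enters through $\mathcal{H}$, through $\dot q$ (via the $-p\cdot\dot q$ term), and through ${_aD_t^\alpha} q$ (via the $-p_\alpha\cdot{_aD_t^\alpha} q$ term), so \eqref{eq:eldf} yields
\begin{equation*}
\partial_2\mathcal{H} - \frac{d}{dt}(-p) + {_tD_b^\alpha}(-p_\alpha) = 0,
\end{equation*}
which is exactly the adjoint equation $\partial_2\mathcal{H} = -\dot p + {_tD_b^\alpha} p_\alpha$. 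Finally, varying with respect to $u$ and $\mu$, which enter only algebraically (no derivatives appear), the Euler--Lagrange equation collapses to the stationary conditions $\partial_3\mathcal{H} = 0$ and $\partial_4\mathcal{H} = 0$.

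The main technical obstacle is the regularity of the multipliers and the rigorous justification that the constrained problem can indeed be replaced by the unconstrained augmented functional. One must argue, in the standard way for Lagrange-multiplier arguments in optimal control, that because the controls take values in open sets the admissible variations of $u$ and $\mu$ are free, that the dynamics \eqref{eq:sitRL1}--\eqref{eq:sitRL} can be used (via an implicit-function-type argument) to express admissible variations of $q$, and that the multipliers so obtained lie in $PC^1$; the appearance of the piecewise-$C^1$ class, rather than $C^2$ as in Problem~\ref{Pb1}, reflects that $u$ and $\mu$ need only be piecewise continuous in the optimal control setting, which requires a mild localization of Theorem~\ref{Thm:FractELeq1} to subintervals where the data are smooth. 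I would handle this by invoking the classical Lagrange multiplier rule as in \cite{CD:Djukic:1972}, applying Theorem~\ref{Thm:FractELeq1} on each subinterval of smoothness, and then patching the resulting relations together; the fractional term ${_tD_b^\alpha} p_\alpha$ is produced automatically by the integration-by-parts formula \eqref{integracao:partes} already used in the proof of Theorem~\ref{Thm:FractELeq1}, so no new fractional machinery is needed beyond what Section~\ref{sec:fdRL} provides.
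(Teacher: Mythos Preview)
Your proposal is correct and follows exactly the same approach as the paper: form the augmented functional $J=\int_a^b[\mathcal{H}-p\cdot\dot q-p_\alpha\cdot{_aD_t^\alpha}q]\,dt$ via the Lagrange multiplier rule and then apply the Euler--Lagrange equation \eqref{eq:eldf} of Theorem~\ref{Thm:FractELeq1} to each group of variables. In fact you supply more detail than the paper, which simply states that the result ``is easily proved applying the optimality condition \eqref{eq:eldf} to the augmented functional''; your explicit computation of each variation and your remarks on the $PC^1$ regularity of the multipliers go beyond what the paper records.
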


\begin{proof}
Minimizing \eqref{eq:COA} subject to
\eqref{eq:sitRL1}--\eqref{eq:sitRL} is equivalent,
by the Lagrange multiplier rule,
to minimize
\begin{multline}
\label{eq:COA1} J[q(\cdot),u(\cdot),\mu(\cdot),p(\cdot),p_{\alpha}(\cdot)]
= \int_a^b \Bigl[{\cal H}[q,u,\mu,p,p_{\alpha}](t)\\
- p(t) \cdot \dot{q}(t)-p_{\alpha}(t)\cdot {_aD}_t^\alpha q(t)\Bigr]dt
\end{multline}
with ${\cal H}$ given by \eqref{eq:HL}.
Theorem~\ref{th:AG} is easily proved applying the
optimality condition \eqref{eq:eldf}
to the augmented functional \eqref{eq:COA1}.
\end{proof}

\begin{definition}(Pontryagin extremal with classical
and fractional derivatives).
\label{PR}
A tuple $\left(q(\cdot),u(\cdot),\mu(\cdot),p(\cdot),p_{\alpha}(\cdot)\right)$
satisfying Theorem~\ref{th:AG} is called a \emph{Pontryagin extremal
with classical and Riemann--Liouville derivatives}.
\end{definition}

\begin{remark}
For problems of the calculus of variations
with classical and Riemann--Liouville derivatives,
one has $\varphi(t,q,u)=u$ and $\rho(t,q,\mu)=\mu$ (Remark~\ref{rem:cv:pc}).
Therefore, ${\cal H} = L + p \cdot u +p_{\alpha}\cdot\mu$. From the Hamiltonian system
of Theorem~\ref{th:AG} we get
\begin{equation}
\begin{cases}\label{edr}
 u=\dot{q}\\
\mu={_aD_t^\alpha} q\\
\partial_2L=-\dot{p}+_tD_b^\alpha p_{\alpha}
\end{cases}
\end{equation}
and from the stationary conditions
\begin{gather*}
\partial_3{\cal H}=0\Leftrightarrow \partial_3 L=-p
\Rightarrow\frac{d}{dt}\partial_3 L=-\dot{p},\\
\partial_4{\cal H}=0\Leftrightarrow \partial_4 L=-p_{\alpha}
\Rightarrow {_tD_b^\alpha}\partial_4 L=-{_tD_b^\alpha} p_{\alpha}.
\end{gather*}
Substituting these two quantities into \eqref{edr}, we arrive to
the Euler--Lagrange equations with classical
and Riemann--Liouville derivatives \eqref{eq:eldf}.
\end{remark}


\subsubsection{Noether's theorem for fractional optimal control problems}

The notion of variational invariance for \eqref{eq:COA}--\eqref{eq:sitRL}
is defined with the help of the augmented functional \eqref{eq:COA1}.

\begin{definition}(Variational invariance of \eqref{eq:COA}--\eqref{eq:sitRL}).
\label{def:inv:gt1}
We say that the integral functional \eqref{eq:COA1}
is invariant under the $\varepsilon$-parameter
family of infinitesimal transformations
\begin{equation}
\label{eq:trf:inf}
\begin{cases}
\bar{t} = t+\varepsilon\tau[q,u,\mu,p,p_{\alpha}](t) + o(\varepsilon) \, , \\
\bar{q}(t) = q(t)+\varepsilon\xi[q,u,\mu,p,p_{\alpha}](t) + o(\varepsilon) \, , \\
\bar{u}(t) = u(t)+\varepsilon\varrho[q,u,\mu,p,p_{\alpha}](t) + o(\varepsilon) \, , \\
\bar{\mu}(t)=\mu(t)+\varepsilon\iota[q,u,\mu,p,p_{\alpha}](t)+o(\varepsilon)\, ,\\
\bar{p}(t) = p(t)+\varepsilon\varsigma[q,u,\mu,p,p_{\alpha}](t)+ o(\varepsilon) \, , \\
\bar{p}_{\alpha}(t) = p_{\alpha}(t)+\varepsilon\nu[q,u,\mu,p,p_{\alpha}](t)+ o(\varepsilon) \, , \\
\end{cases}
\end{equation}
if
\begin{multline}
\label{eq:condInv}
\left({\cal H}[\bar{q},\bar{u},\bar{\mu},\bar{p},\bar{p}_{\alpha}](\bar{t})
-\bar{p}(\bar{t})\cdot \dot{\bar{q}}(\bar{t})
-\bar{p}_{\alpha}(\bar{t}) \cdot  {_{\bar{a}}D_{\bar{t}}^\alpha}
\bar{q}(\bar{t})\right) d\bar{t} \\
=\left({\cal H}[q,u,\mu,p,p_{\alpha}](t)
-p(t)\cdot \dot{q}(t)-p_{\alpha}(t)\cdot{_aD_t^\alpha} q(t)\right) dt.
\end{multline}
\end{definition}

Next theorem provides an extension of Noether's theorem to the
wider fractional context of optimal control problems
with classical and Riemann--Liouville derivatives.

\begin{theorem}(Noether's theorem in Hamiltonian form
for optimal control problems with classical and Riemann--Liouville derivatives).
\label{thm:mainResult}
If \eqref{eq:COA}--\eqref{eq:sitRL} is variationally invariant,
in the sense of Definition~\ref{def:inv:gt1},
and functions $\xi$ and $p_{\alpha}$
satisfy condition $(\mathcal{C})$ of Theorem~\ref{trfo}, then
\begin{multline}
\label{eq:tndf:CO}
\frac{d}{dt}\Biggl[-\xi\cdot p
-\sum_{r=0}^{\infty}\Bigl((-1)^{r}p_{\alpha}^{(r)}
\cdot {_aI_t}^{r+1-\alpha}\xi\\
+\xi^{(r)} \cdot {_tI_b}^{r+1-\alpha}p_{\alpha}\Bigr)
+ \tau\left({\cal H}-(1-\alpha)p_{\alpha}
\cdot{_{a}D_{t}^{\alpha}}q\right)\Biggr]= 0
\end{multline}
along any Pontryagin extremal with classical
and Riemann--Liouville derivatives (Definition~\ref{PR}).
\end{theorem}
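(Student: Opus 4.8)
The plan is to reduce the statement to the calculus of variations version, Theorem~\ref{theo:tndf}, via the Lagrange multiplier reformulation already used to prove Theorem~\ref{th:AG}. Recall that rendering \eqref{eq:COA} stationary subject to \eqref{eq:sitRL1}--\eqref{eq:sitRL} is equivalent to rendering stationary the augmented functional \eqref{eq:COA1}, which has exactly the form of Problem~\ref{Pb1} once we regard the extended curve $\bigl(q,u,\mu,p,p_{\alpha}\bigr)$ as the new ``state'' and
$$
\tilde{L} := {\cal H}\bigl(t,q,u,\mu,p,p_{\alpha}\bigr) - p\cdot\dot{q} - p_{\alpha}\cdot {_aD_t^\alpha} q
$$
as the new Lagrangian. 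The structural point is that among the extended state variables only $q$ enters differentiated, so $\partial_3\tilde{L} = -p$ and $\partial_4\tilde{L} = -p_{\alpha}$, while the derivatives of $\tilde{L}$ with respect to the velocities and the fractional derivatives of $u,\mu,p,p_{\alpha}$ vanish identically. Consequently the Euler--Lagrange equations \eqref{eq:eldf} written for $\tilde{L}$ and the extended state decompose precisely into the Hamiltonian system and the stationary conditions of Theorem~\ref{th:AG} (just as in the remark following Definition~\ref{PR}): a Pontryagin extremal in the sense of Definition~\ref{PR} is exactly an extremal of the functional \eqref{eq:COA1} in the sense of Definition~\ref{df1}.

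Next I would verify that variational invariance of \eqref{eq:COA}--\eqref{eq:sitRL} in the sense of Definition~\ref{def:inv:gt1} is nothing but invariance of the functional \eqref{eq:COA1}, read through $\tilde{L}$, in the sense of Definition~\ref{def:invadf}: indeed \eqref{eq:condInv} is the integrand-level form of that invariance, and integrating it over an arbitrary subinterval $[t_a,t_b]\subseteq[a,b]$ gives Definition~\ref{def:invadf} verbatim, with independent-variable generator $\tau$ and state generator $(\xi,\varrho,\iota,\varsigma,\nu)$. Since $\partial_4\tilde{L}=-p_{\alpha}$ and the operators $\,{}_aI_t^{k-\alpha}$, $\,{}_tI_b^{k-\alpha}$ and $(\cdot)^{(k)}$ are linear, the hypothesis that $\xi$ and $p_{\alpha}$ satisfy condition $(\mathcal{C})$ of Theorem~\ref{trfo} carries over verbatim to $\xi$ and $\partial_4\tilde{L}$, so Theorem~\ref{theo:tndf} is applicable to $\tilde{L}$. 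A minor subtlety: in the conserved quantity furnished by Theorem~\ref{theo:tndf} only the generator of the differentiated variable appears; because $u,\mu,p,p_{\alpha}$ enter $\tilde{L}$ algebraically, the generators $\varrho,\iota,\varsigma,\nu$ do not occur in the momentum terms and only $\xi$ survives.

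It then remains to substitute $\partial_3\tilde{L}=-p$ and $\partial_4\tilde{L}=-p_{\alpha}$ into the conclusion \eqref{eq:LC:Frac:RL1} of Theorem~\ref{theo:tndf} and to simplify the ``energy'' bracket. Using $\tilde{L}={\cal H}-p\cdot\dot{q}-p_{\alpha}\cdot {_aD_t^\alpha}q$ one computes
$$
\tilde{L}-\dot{q}\cdot\partial_3\tilde{L}-\alpha\,\partial_4\tilde{L}\cdot {_aD_t^\alpha}q = {\cal H}-(1-\alpha)\,p_{\alpha}\cdot {_aD_t^\alpha}q,
$$
which, together with the sign changes in the momentum sum, turns \eqref{eq:LC:Frac:RL1} into \eqref{eq:tndf:CO}. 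The step I expect to be the main obstacle is the one just before this: making rigorous that Theorem~\ref{theo:tndf}, whose statement is phrased for a Lagrangian depending only on $(t,q,\dot{q},{_aD_t^\alpha}q)$, continues to hold verbatim when the extra, non-differentiated variables $u,\mu,p,p_{\alpha}$ are present. This forces one to re-run for $\tilde{L}$ the two-step argument behind Theorem~\ref{theo:tndf} (Theorem~\ref{theo:tnadf1} followed by the time-reparametrization) and to check that the extra variables contribute nothing new --- the necessary condition of invariance picks up purely algebraic terms that cancel against the new stationary conditions $\partial_3{\cal H}=\partial_4{\cal H}=0$, exactly in the manner of the remark following Definition~\ref{PR}.
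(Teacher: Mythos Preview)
Your proposal is correct and follows exactly the approach of the paper: the paper's proof consists of the single sentence ``The fractional conservation law \eqref{eq:tndf:CO} is obtained by applying Theorem~\ref{theo:tndf} to the equivalent functional \eqref{eq:COA1}.'' You have simply unpacked this, identifying $\tilde{L}$, computing $\partial_3\tilde{L}=-p$, $\partial_4\tilde{L}=-p_\alpha$, and the energy term ${\cal H}-(1-\alpha)p_\alpha\cdot{_aD_t^\alpha}q$, and flagging the (genuine) technical point that Theorem~\ref{theo:tndf} must be re-read with the extra algebraic variables $u,\mu,p,p_\alpha$ present---a subtlety the paper passes over in silence.
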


\begin{proof}
The fractional conservation law
\eqref{eq:tndf:CO}
is obtained by applying Theorem~\ref{theo:tndf}
to the equivalent functional \eqref{eq:COA1}.
\end{proof}

Theorem~\ref{thm:mainResult} gives an
interesting result for autonomous fractional problems.
Let us consider an autonomous fractional optimal control
problem, \textrm{i.e.}, the case when functions $L$,
$\varphi$ and $\rho$ of \eqref{eq:COA}--\eqref{eq:sitRL}
do not depend explicitly on the independent variable:
\begin{gather}
I[q(\cdot),u(\cdot),\mu(\cdot)] =\int_a^b L\left(q(t),u(t),\mu(t)\right) dt
\longrightarrow \min \, , \label{eq:FOCP:CO}\\
\dot{q}(t)=\varphi(q(t),u(t))\label{eq:FOCP:CO10}\,,\\
_aD_t^\alpha
q(t)=\rho\left(q(t),\mu(t)\right)\,. \label{eq:45}
\end{gather}

\begin{corollary}
\label{cor:FOCP:CO}
For the autonomous fractional problem
\eqref{eq:FOCP:CO}--\eqref{eq:45} one has
\begin{multline}
\label{eq:frac:eng}
\frac{d}{dt}\Biggl[ {\cal H}(q(t), u(t),\mu(t),p(t),p_{\alpha}(t))\\
-(1-\alpha)p_{\alpha}(t)\cdot{_{a}D_{t}^{\alpha}}q(t) \Biggr]= 0
\end{multline}
along any Pontryagin extremal with classical and Riemann--Liouville derivatives
$(q(\cdot),u(\cdot),\mu(\cdot),p(\cdot),p_{\alpha}(\cdot))$.
\end{corollary}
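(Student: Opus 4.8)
The plan is to derive Corollary~\ref{cor:FOCP:CO} as the special case of Theorem~\ref{thm:mainResult} that arises when the problem is autonomous. The key observation is that if $L$, $\varphi$ and $\rho$ do not depend explicitly on $t$, then the augmented functional \eqref{eq:COA1} (equivalently, its integrand) is invariant under the one-parameter group of time translations $\bar t = t + \varepsilon$, $\bar q(t) = q(t)$, $\bar u(t) = u(t)$, $\bar\mu(t)=\mu(t)$, $\bar p(t)=p(t)$, $\bar p_\alpha(t)=p_\alpha(t)$. First I would verify condition \eqref{eq:condInv} of Definition~\ref{def:inv:gt1} for this transformation: since $\bar t = t+\varepsilon$ gives $d\bar t = dt$, and all the state/control/costate variables and the form of $\mathcal H$ are unchanged, the identity \eqref{eq:condInv} holds trivially — the autonomy of $L$, $\varphi$, $\rho$ is exactly what is needed so that $\mathcal H$ has no explicit $t$-dependence. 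Thus the problem is variationally invariant in the required sense.

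Next I would read off the generators of this symmetry: $\tau[q,u,\mu,p,p_\alpha](t) \equiv 1$ and $\xi[q,u,\mu,p,p_\alpha](t) \equiv 0$ (and likewise $\varrho=\iota=\varsigma=\nu=0$). Since $\xi\equiv 0$, condition $(\mathcal C)$ of Theorem~\ref{trfo} is satisfied for $\xi$ automatically (all the sequences involving $\xi$ vanish), so the only hypothesis left to impose is condition $(\mathcal C)$ on $p_\alpha$, which is precisely what is assumed implicitly by requiring that Theorem~\ref{thm:mainResult} be applicable. Then I substitute $\tau = 1$ and $\xi = 0$ into the conservation law \eqref{eq:tndf:CO}. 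Every term in the sum over $r$ contains a factor $\xi^{(r)}$ or multiplies $\xi$, hence vanishes; the term $-\xi\cdot p$ vanishes; and what survives is exactly
\begin{equation*}
\frac{d}{dt}\Bigl[\mathcal H - (1-\alpha)\,p_\alpha\cdot {_aD_t^\alpha}q\Bigr] = 0,
\end{equation*}
which is \eqref{eq:frac:eng}. This holds along any Pontryagin extremal with classical and Riemann--Liouville derivatives, as asserted.

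The only point requiring a little care — and the closest thing to an obstacle — is the justification that, in the autonomous case, invariance of the \emph{integrand} (rather than merely of the integral over every subinterval) really follows; but this is immediate because time translation maps the interval $[t_a,t_b]$ to $[t_a+\varepsilon,t_b+\varepsilon]$ with unit Jacobian and leaves the integrand pointwise unchanged, so \eqref{eq:condInv} is satisfied identically in $\varepsilon$. One should also note that the generic Noether conservation law \eqref{eq:tndf:CO} was derived from Theorem~\ref{theo:tndf} applied to \eqref{eq:COA1}, so specializing the generators there is legitimate. No further computation is needed: Corollary~\ref{cor:FOCP:CO} is a one-line consequence of Theorem~\ref{thm:mainResult} once the translation symmetry and its generators $(\tau,\xi)=(1,0)$ are identified.
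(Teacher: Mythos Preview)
Your approach is the same as the paper's: identify time translation as a symmetry of the autonomous problem, read off $\tau=1$ and $\xi=0$, and plug into \eqref{eq:tndf:CO}. However, you call the verification of \eqref{eq:condInv} ``trivial'' and justify it only by noting that $d\bar t=dt$ and that $\mathcal{H}$ has no explicit $t$-dependence. That covers the $\mathcal{H}$ and $p\cdot\dot q$ terms, but not the fractional term $\bar p_\alpha(\bar t)\cdot{_{\bar a}D_{\bar t}^\alpha}\bar q(\bar t)$ versus $p_\alpha(t)\cdot{_aD_t^\alpha}q(t)$. The Riemann--Liouville operator ${_aD_t^\alpha}$ is \emph{not} obviously translation-invariant, because it carries the endpoint $a$; under $\bar t=t+\varepsilon$ the left endpoint becomes $\bar a=a+\varepsilon$, and one must check that
\[
{_{\bar a}D_{\bar t}^\alpha}\bar q(\bar t)={_aD_t^\alpha}q(t).
\]
This is precisely the computation the paper carries out (a change of variable $\theta=s+\varepsilon$ in the defining integral), and it is the only nontrivial step in the proof. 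Your ``closest thing to an obstacle'' --- integrand versus integral invariance --- is not the real issue; the real issue is the behaviour of the fractional derivative under translation of both $t$ and the base point $a$. Once that identity is verified, the rest of your argument goes through exactly as in the paper.
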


\begin{proof}
As the Hamiltonian ${\cal H}$ does not depend explicitly on the
independent variable $t$, we can easily see that
\eqref{eq:FOCP:CO}--\eqref{eq:45} is invariant
under translation of the time variable:
the condition of invariance \eqref{eq:condInv} is satisfied with
$\bar{t}(t) = t+\varepsilon$,
$\bar{q}(t) = q(t)$,
$\bar{u}(t) = u(t)$,
$\bar{\mu}(t)=\mu(t)$,
$\bar{p}(t) = p(t)$,
and $\bar{p}_{\alpha}(t)=p_{\alpha}(t)$. Indeed,
given that $d\bar{t} = dt$, the invariance
condition \eqref{eq:condInv} is verified if
${_{\bar{a}}D_{\bar{t}}^\alpha} \bar{q}(\bar{t}) =
{_aD_t^\alpha} q(t)$. This is true because
\begin{equation*}
\begin{split}
_{\bar{a}} & D_{\bar{t}}^\alpha \bar{q}(\bar{t}) \\
&= \frac{1}{\Gamma(1-\alpha)}\frac{d}{d\bar{t}}
\int_{\bar{a}}^{\bar{t}} (\bar{t}-\theta)^{-\alpha}\bar{q}(\theta)d\theta \\
&= \frac{1}{\Gamma(1-\alpha)}\frac{d}{dt}
\int_{a + \varepsilon}^{t+\varepsilon} (t + \varepsilon-\theta)^{-\alpha}\bar{q}(\theta)d\theta \\
&= \frac{1}{\Gamma(1-\alpha)}\frac{d}{dt}
\int_{a}^{t} (t-s)^{-\alpha}\bar{q}(s + \varepsilon)ds \\
&= {_{a}D_{t}^\alpha} \bar{q}(t + \varepsilon) = {_{a}D_{t}^\alpha} \bar{q}(\bar{t}) \\
&= {_{a}D_{t}^\alpha} q(t) \, .
\end{split}
\end{equation*}
Using the notation in \eqref{eq:trf:inf}, we have
$\tau = 1$, $\xi=\varrho=\varsigma=\sigma=\nu=0$.
From Theorem~\ref{thm:mainResult} we arrive to
the intended equality \eqref{eq:frac:eng}.
\end{proof}

The Corollary~\ref{cor:FOCP:CO} shows that in contrast with
the classical autonomous problem of optimal control,
for \eqref{eq:FOCP:CO}--\eqref{eq:45}
the Hamiltonian ${\cal H}$ does not define a conservation law.
Instead of the classical equality
$\frac{d}{dt}\left({\cal H}\right)=0$, we have
\begin{equation}
\label{eq:ConsHam:alpha}
\frac{d}{dt} \left[
{\cal H} + \left(\alpha-1\right) p_{\alpha}
\cdot {_aD_t^\alpha} q \right] = 0 \, ,
\end{equation}
\textrm{i.e.}, conservation of the Hamiltonian ${\cal H}$
plus a quantity that depends on the fractional order $\alpha$ of differentiation.
This seems to be explained by violation of the homogeneity
of space-time caused by the fractional
derivatives, when $\alpha\neq 1$. If $\alpha=1$, then
we obtain from \eqref{eq:ConsHam:alpha} the classical result:
the Hamiltonian ${\cal H}$ is preserved along all the
Pontryagin extremals.


\section{ILLUSTRATIVE EXAMPLE}
\label{exa}

In this section  we consider an
example where the fractional Lagrangian
and velocity vectors do not depend explicitly on the
independent variable $t$ (autonomous case).
We use our Corollary~\ref{cor:FOCP:CO} to
establish a conservation law.

\begin{example}
Consider the fractional optimal control problem
\begin{gather*}
 I[q(\cdot)] = \frac{1}{2}\int_0^1 \left[q^2(t)+
u^2(t)+\mu^2(t)\right] dt
\longrightarrow \min,\\
\dot{q}(t)=-q(t)+u(t),\\
_{0}D_{1}^{\alpha}q(t)=-q(t)+\mu(t).
\end{gather*}
The Hamiltonian ${\cal H}$ defined by \eqref{eq:HL}
takes the following form:
$${\cal H}=\frac{1}{2}\left(q^2+
u^2+\mu^2\right)+p(-q+u)+p_{\alpha}(-q+\mu).$$
It follows from our Corollary~\ref{cor:FOCP:CO} that
\begin{equation}
\label{eq:cons:Energ:Ex2}
\frac{d}{dt}\left[
\frac{1}{2}\left(q^2+u^2+\mu^2\right)+p(-q+u)
+\alpha p_{\alpha}(-q+\mu) \right] = 0
\end{equation}
along any fractional Pontryagin extremal of the problem.
\end{example}

For $\alpha = 1$ one has $u=\mu$ and $p=p_{\alpha}$,
and the conservation law \eqref{eq:cons:Energ:Ex2} can
be interpreted as \emph{conservation of energy}.


\section*{ACKNOWLEDGMENT}

\small

This work was supported by {\it FEDER} funds through
{\it COMPETE} --- Operational Programme Factors of Competitiveness
(``Programa Operacional Factores de Competitividade'')
and by Portuguese funds through the
{\it Center for Research and Development
in Mathematics and Applications} (University of Aveiro)
and the Portuguese Foundation for Science and Technology
(``FCT --- Funda\c{c}\~{a}o para a Ci\^{e}ncia e a Tecnologia''),
within project PEst-C/MAT/UI4106/2011
with COMPETE number FCOMP-01-0124-FEDER-022690.
The first author was also supported by the post-doc fellowship
SFRH/BPD/51455/2011 from FCT, program {\it Ci\^{e}ncia Global};
the second author by the Portugal--Austin (USA) project
UTAustin/MAT/0057/2008.



\bigskip

{\bf This is a preprint of a paper whose final and definite form will be published in:
51st IEEE Conference on Decision and Control, December 10-13, 2012,
Maui, Hawaii, USA. Article Source/Identifier: PLZ-CDC12.1832.45c07804.
Submitted 08-March-2012; accepted 17-July-2012.}


\end{document}